\theoremstyle{plain}
\newtheorem{theorem}{Theorem}[section]
\newtheorem{corollary}[theorem]{Corollary}
\newtheorem{lemma}[theorem]{Lemma}
\newtheorem{claim}[theorem]{Claim}
\newtheorem{proposition}[theorem]{Proposition}
\newtheorem{definition-lemma}[theorem]{Definition-Lemma}
\newtheorem{remark}[theorem]{Remark}
\def\ideal#1.{I_{#1}}
\def\ring#1.{\mathcal {O}_{#1}}
\def\fring#1.{\hat{\mathcal {O}}_{#1}}
\def\proj#1.{\mathbb P(#1)}
\def\pr #1.{\mathbb P^{#1}}
\def\af #1.{\mathbb A^{#1}}
\def\Hz #1.{\mathbb F_{#1}}
\def\Hbz #1.{\overline{\mathbb F}_{#1}}
\def\pic#1.{\operatorname {Pic}\,(#1)}
\def\pico#1.{\operatorname{Pic}^0(#1)}
\def\picg#1.{\operatorname {Pic}^G(#1)}
\def\ner#1.{NS (#1)}
\def\rdown#1.{\llcorner#1\lrcorner}
\def\rup#1.{\ulcorner#1\urcorner}
\def\cone#1.{\operatorname {NE}(#1)}
\def\ccone#1.{\overline{\operatorname {NE}}(#1)}
\def\coef#1.{\frac{(#1-1)}{#1}}
\def\vit#1.{D_{\langle #1 \rangle}}
\def\mm#1.{\overline {M}_{0,#1}}
\def\H1#1.{H^1(#1,{\ring #1.})}
\def\ac#1.{\overline {\mathbb F}_{#1}}
\def\adj#1.{\frac {#1-1}{#1}}
\def\spn#1.{\overline{#1}}
\def\ses#1.#2.#3.{0\to #1\to #2\to #3 \to 0}
\def\pek#1.#2.{\Cal P^{#1}(#2)}
\def\plk#1.#2.{\Cal P^{\leq #1}(#2)}
\def\ev#1.{\operatorname{ev_{#1}}}
\def\bminv#1.{(\nu_1,s_1;\nu_2,s_2;\dots ;\nu_{#1},s_{#1};\nu_{r+1})}
\def\zinv#1.{(\nu_1,s_1;\nu_2,s_2;\dots ;\nu_{#1},s_{#1};0)}
\def\iinv#1.{(\nu_1,s_1;\nu_2,s_2;\dots ;\nu_{#1},s_{#1};\infty)}
\def\map#1.#2.{#1 \longrightarrow #2}
\def\rmap#1.#2.{#1 \dasharrow #2}
\def\emb#1.#2.{#1 \hookrightarrow #2}
\def\N{\mathbb N}
\def\e{\Cal E}
\def\e1{E_1}
\def\e2{E_2}
\def\bB{\mathbf B}
\newcommand\Q{{\mathbb{Q}}}
\newcommand\R{{\mathbb{R}}}
\begin{document}
\title{Invariance of certain plurigenera for surfaces in mixed
 characteristics}
\author{Andrew Egbert}

\author{Christopher D. Hacon} 
\address{Department of Mathematics \\  
University of Utah\\  
Salt Lake City, UT 84112, USA}
 \thanks{The second author
 was supported by NSF research grants no: DMS-1300750, DMS-1265285 and by
  a grant from the Simons Foundation; Award Number: 256202.}

\maketitle
\begin{abstract} We prove the deformation invariance of Kodaira dimension and of certain plurigenera for log surfaces which are smooth over a DVR. \end{abstract}
\section{Introduction}
If $f:X\to T$ is a smooth projective morphism of complex quasi-projective varieties, then by a celebrated theorem of Siu (\cite{Siu98}, \cite{Siu02}), it is known that the plurigenera of the fibers $P_m(X_t):=h^0(mK_{X_t})$ are independent of the point $t\in T$. This result (and its generalizations to log pairs)  is a fundamental fact of great importance in higher dimensional birational geometry. It plays a fundamental role in the construction of moduli spaces of varieties of log general type. Unluckily, this result does not generalize even to families of surfaces over a curve (or a discrete valuation ring DVR).
In \cite{La83}, it is shown that $P_1$ is not deformation invariant for Enriques surfaces in characteristic $2$. In \cite{KU85}, it is shown that in fact the deformation invariance of plurigenera does not hold for certain elliptic surfaces and in \cite{Suh08}, there are examples of smooth families of surfaces of general type over any DVR for which $P_1$ is non constant (and in fact its value can jump by an arbitrarily big amount). On the positive side, in \cite{KU85} it is shown that 
if $X\to {\rm Spec }(R)$ is a smooth family of surfaces over a DVR in positive or mixed characteristic, then one can run the minimal model program for $X$ (over $R$). As a consequence of this, it is observed that $\kappa (X_K)=\kappa (X_k)$ where $k$ is the residue field and $K$ is the fraction field of $R$. It should be noted that the minimal model program is established for semistable families of surfaces in positive or mixed characteristic  (see  \cite{Kaw94}), for log canonical surfaces over excellent base schemes (see \cite{Tan16}) and for $3$ folds over a field $k$ of characteristic $p\geq 7$ (see \cite{HX13} and \cite{Bir15}).
In this paper (Theorem \ref{t-definv} and Corollary \ref{c-mmp}), we generalize the result of Katsura and Ueno to log surfaces (smooth over a DVR) and we show the deformation invariance of certain plurigenera.
\begin{theorem}Let $(X,B)$ be a klt pair which is log smooth, projective of dimension $2$ over a DVR $R$ with perfect residue field $k$ of characteristic $p > 0$, then $\kappa (K_{X_k}+B_k)=\kappa (K_{X_K}+B_K)$  and we may run the minimal model program with scaling of an ample divisor $H$. 
Moreover, if either $\kappa (K_{X_k}+B_k)\ne 1$ or $\kappa (K_{X_k}+B_k)= 1$ and $B_k$ is big over ${\rm Proj}R(K_{X_k}+B_k)$, then there exists an integer $m_0>0$  such that for any positive integer $m\in m_0\N$ we have
$$h^0(m (K_{X_K}+B_K))=h^0(m (K_{X_k}+B_k)).$$\end{theorem}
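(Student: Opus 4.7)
The plan is to first run the relative minimal model program for $(X,B)$ over $R$, then establish a relative abundance statement on the minimal model, and finally deduce the plurigenera invariance via cohomology and base change together with upper semicontinuity.

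\textbf{Step 1 (MMP and Kodaira dimension).} I would invoke the MMP for log canonical surfaces over excellent bases (Tanaka) together with the semistable MMP for families of surfaces (Kawamata) to run a $(K_X+B)$-MMP with scaling of the ample divisor $H$ over $\Spec R$. Termination is automatic in relative dimension two. Each contracted or flipped curve lies in a single fiber (either $X_k$ or $X_K$), and the Kodaira dimension of each fiber is invariant under the restricted steps, which are ordinary log-MMP operations on the fibers. Reaching a relative minimal model $(X',B')/R$ or a Mori fiber space $X'\to Z/R$, the equality $\kappa(K_{X_k}+B_k)=\kappa(K_{X_K}+B_K)$ reduces to a direct comparison: in the Mori fiber space case both sides are $-\infty$, and in the nef case both fibers have the same Kodaira dimension of the (nef, semiample) log canonical divisor.

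\textbf{Step 2 (abundance over $R$).} After replacing $(X,B)$ by its relative minimal model, $K_X+B$ is nef over $R$. Log abundance for klt surface pairs is known on the special fiber, giving semiampleness of $K_{X_k}+B_k$. The key technical point is to lift this to semiampleness of $K_X+B$ over $R$. For this I would combine a relative base point free argument with the short exact sequence associated to the principal divisor $X_k\subset X$, reducing the required surjectivity of $H^0(X,m(K_X+B))\to H^0(X_k,m(K_{X_k}+B_k))$ to the vanishing of an appropriate $H^1$. On surfaces, such vanishing is accessible through the structure of the Zariski decomposition of $K_{X_k}+B_k$ and the klt hypothesis.

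\textbf{Step 3 (plurigenera).} Once $K_X+B$ is semiample over $R$ with relative Iitaka model $f:X\to Y$, the sheaf $f_*\OO_X(m(K_X+B))$ is locally free on $Y$ for $m\in m_0\N$ sufficiently divisible, so cohomology and base change yields equality of $h^0$ on the two geometric fibers. The case distinction in the theorem reflects which configurations of $f$ can support this conclusion:
\begin{itemize}
\item If $\kappa=-\infty$ or $\kappa=0$, the statement is either trivial ($h^0=0$) or comes from $K_X+B\sim_{\Q,R}0$ after torsion analysis.
\item If $\kappa=2$, the big case allows one to use Fujita-type vanishing on the surface fibers, producing the required base change and bounding the torsion of $R^1f_*$.
\item If $\kappa=1$ and $B_k$ is big over $Y_k$, the fibers of $f_k$ are curves on which $B_k$ provides enough ample boundary to kill the pathological contribution of wild fibers; without this, the Katsura--Ueno examples show that $P_m$ can genuinely jump.
\end{itemize}

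\textbf{Main obstacle.} The hardest step will be step 2: transferring abundance and semiampleness from the closed fiber to the total space $X/R$ in mixed characteristic. The usual Kawamata--Viehweg vanishing is unavailable, and one must argue via the explicit structure of the (log) Iitaka fibration of $(X_k,B_k)$ and either a direct section-lifting argument (when $K+B$ is big) or a careful analysis of multiple/wild fibers of the relative elliptic fibration (when $\kappa=1$). The bigness hypothesis on $B_k$ in the $\kappa=1$ case is precisely what is needed to avoid the obstruction produced by wild fibers of the Iitaka fibration of $(X_k,B_k)$.
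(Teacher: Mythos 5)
There is a genuine gap, and it sits exactly at your Step 1. No relative $(K_X+B)$-MMP over ${\rm Spec}(R)$ is available as an input to this theorem: $X$ is two-dimensional \emph{over} $R$, hence an arithmetic threefold, so Tanaka's MMP for (log canonical, excellent) surfaces does not apply — that result concerns two-dimensional total spaces — and Kawamata's semistable MMP does not provide a log MMP for a klt pair $(X,B)$ with boundary. In fact the ability to run the $(K_X+B)$-MMP with scaling over $R$ is part of the \emph{conclusion} of the theorem, and in the paper it is deduced (Corollary \ref{c-mmp}) from the finite generation of adjoint rings, which in turn rests on the plurigenera comparison. Taking the relative MMP as the starting point is therefore circular. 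The missing idea is the reduction that makes the family MMP possible at all: pass to a terminal log smooth model $(X',B')$ with $K_{X'}+B'=\nu^*(K_X+B)$ and replace $B'_k$ by $\Theta_k=B'_k-B'_k\wedge N_\sigma(K_{X'_k}+B'_k)$, which leaves all section rings unchanged but guarantees that no component of the boundary lies in $\mathbf B(K_{X'_k}+\Theta_k)$. Then every $(K_{X'_k}+\Theta_k)$-negative extremal curve $C$ satisfies $C\cdot \Theta_k\geq 0$, hence $K_{X'_k}\cdot C<0$, so each MMP step on the closed fiber is the blow-down of an exceptional curve of the first kind on a smooth surface. Only in that special situation does one have an extension result over the DVR, namely Katsura--Ueno (Theorem \ref{t-KU}): after extending $R$, the blow-down spreads out to a morphism of smooth models over $R$ contracting a $(-1)$-curve in each fiber. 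Without this reduction there is no mechanism for propagating contractions from $X_k$ to $X_K$, and your assertion that each step "restricts to an ordinary log-MMP operation on the fibers" is precisely what needs proof.

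Your Steps 2--3 also diverge from what is actually feasible. The paper never proves relative semiampleness of $K_X+B$ over $R$ nor constructs a relative Iitaka fibration with locally free pushforwards; it only needs nefness of the total space (which follows from nefness on the closed fiber, Lemma \ref{l-open}) and then compares $h^0$ fiber by fiber using vanishing theorems on each surface fiber separately (\cite{Tan15}), semicontinuity, and the constancy of $\chi$ in the flat family. This sidesteps any vanishing or base-change statement on the threefold total space, which, as you yourself note, is the obstruction in mixed characteristic; your proposed route through $H^1$-vanishing on $X$ and "the structure of the Zariski decomposition" does not supply an argument for it. Your heuristic about wild fibers in the $\kappa=1$ case points in the right direction, but the actual use of the bigness of $B_k$ over ${\rm Proj}\,R(K_{X_k}+B_k)$ is to produce an ample class making Tanaka's Kodaira-type vanishing applicable on the closed fiber, after which the same semicontinuity-plus-Euler-characteristic argument closes the case.
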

The strategy is to reduce the proof of the above theorem to the case when $(X_k,B_k)$ is terminal and $\mathbf B (K_{X_k}+B_k)$ contains no components of the support of $B_k$. In this case we observe that the steps of a $K_{X_k}+B_k$ mmp are also steps of a $K_{X_k}$ mmp and we are thus able to deduce the result from \cite{KU85}.
\begin{remark} Many results and techniques in this paper were developed in the first author's Ph.D.'s thesis \cite{Egb16}.\end{remark}
\section{Preliminaries}
Let $X$ be a normal projective variety, ${\rm WDiv} (X)$ the group of Weil divisors. If $B=\sum b_iB_i\in {\rm WDiv} _\Q(X)$ is a $\Q$-divisor on $X$, then $\lfloor B \rfloor=\sum \lfloor b_i\rfloor B_i$
where $\lfloor b_i\rfloor={\rm max}\{n\in \mathbb Z|n\leq b_i\}$. We denote $\{B\}=B-\lfloor B \rfloor$ and $|B|=|\lfloor B \rfloor|+\{B\}$ where $$|\lfloor B \rfloor|=\{D\in {\rm WDiv} (X)|
 D\geq 0,\ D-\lfloor B \rfloor=(f),\ f\in K(X)\}.$$ The {\bf stable base locus} of $B$ is $\bB (D)=\cap _{m\in \N} {\rm Bs}(mD)$.
If $D_1,\ldots , D_r$ are $\Q$-divisors on a normal projective variety $X$, then $$R(D_1,\ldots , D_r):=\bigoplus _{m_1,\ldots , m _r\in \N}H^0(\mathcal O _X(\sum _{i=1}^r m_iD_i)).$$ 
Let $(X,B)$ be a {\bf pair} so that $X$ is normal, $0\leq B$ is a $\Q$-divisor and $K_X+B$ is $\Q$-Cartier. If $\nu :X'\to X$ is a proper birational morphism, then we write $K_{X'}+B_{X'}=\nu ^*(K_X+B)$. We say that $(X,B)$ is {\bf Kawamata log terminal or klt} (res. {\bf terminal}) if for any log resolution $\lfloor B_{X'}\rfloor \leq 0$ (resp. $\lfloor B_{X'}\rfloor \leq E$ where $E$ denotes the reduced exceptional divisor). 
We let $\mathbf M _B$ be the $b$-divisor defined by the sum of the strict transform of $B$ and the exceptional divisors (over $X$). 
We refer the reader to \cite{KM98} and \cite{BCHM10} for the standard definitions of the minimal model program including extremal rays, flipping and divisorial contractions, running a minimal model program with scaling, log terminal and weak log canonical models.

\begin{theorem}\label{t-ct} Let $(X,B)$ a 2-dimensional projective klt pair over an algebraically closed field $k$. Then 
$$\overline {NE}(X)=\overline {NE}(X)_{K_X+B\geq 0}+\sum _{i\in I}\R _{\geq 0}C_i$$
where $I$ is countable, $(K_X+B)\cdot C_i<0$, $C_i$ is rational and $C_i^2<0$.
If $H$ is an ample $\Q$-divisor on $X$, then the set $\{ i\in I|(K_X+B+H)\cdot C_i\leq 0\}$ is finite.
\end{theorem}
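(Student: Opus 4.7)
The plan is to prove this via the classical cone theorem for two-dimensional klt pairs, which holds in arbitrary characteristic and does not require Kawamata--Viehweg vanishing; a full treatment in the broader log canonical setting appears in \cite{Tan16}. Since klt surfaces over an algebraically closed field are automatically $\Q$-factorial, intersection numbers are well-defined on $X$ itself, and I can work directly without passing to a small modification.

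The central technical step is to show that each $(K_X+B)$-negative extremal ray of $\overline{NE}(X)$ is spanned by an irreducible curve $C$ with $C^2<0$. The key observation is that an irreducible curve $D$ with $D^2\ge 0$ is nef on a smooth projective surface (distinct irreducible curves meet non-negatively, and $D\cdot D\ge 0$ by hypothesis); in particular, if $D^2>0$ then $[D]$ is big and lies in the interior of $\overline{NE}(X)$, so it cannot generate an extremal ray. In the singular klt case one runs the same argument on a minimal resolution, exploiting the negative-definiteness of the exceptional locus over each klt singularity to transport the conclusion back to $X$. Rationality of $C_i$ then follows from the contraction associated to the extremal ray together with an adjunction/genus computation on the resulting klt surface.

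Countability of $I$ follows from the observation that two distinct irreducible curves sharing a numerical class must intersect non-negatively, so each class with negative self-intersection contains at most one irreducible curve. Combined with finite generation of $NS(X)$ and the fact that the set of classes of $H$-degree at most $N$ is finite for every $N$, the collection $\{C_i\}$ is countable. For the finiteness statement, the inequality $(K_X+B+H)\cdot C_i\le 0$ rearranges to $H\cdot C_i\le -(K_X+B)\cdot C_i$, and a standard length-of-extremal-ray bound for two-dimensional klt pairs (coming from adjunction on $C_i$ together with effectivity of $B$) yields $-(K_X+B)\cdot C_i\le M$ for a constant $M=M(X,B)$. Thus $H\cdot C_i$ is uniformly bounded and finiteness follows from finite generation of $NS(X)$.

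The main obstacle is the key claim $C_i^2<0$ in positive characteristic, where the Kawamata--Viehweg vanishing commonly used to produce the supporting contraction can fail. This is precisely why the two-dimensional Hodge-index/adjunction argument sketched above is needed in place of the higher-dimensional vanishing approach.
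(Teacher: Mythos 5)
The paper does not actually prove this statement; it quotes it from \cite[3.13, 3.15]{Tan14}. Your proposal likewise outsources the core content --- the existence of the decomposition of $\overline{NE}(X)$ into the $(K_X+B)$-nonnegative part plus countably many rays --- to the literature, which is fine, and your treatment of countability, and of the finiteness statement via a length bound $-(K_X+B)\cdot C_i\leq M$ together with finiteness of integral classes of bounded $H$-degree in $\overline{NE}(X)$, is standard and essentially correct.

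The genuine gap is in the step you yourself identify as central: the claim that every $(K_X+B)$-negative extremal ray is spanned by an irreducible curve with $C^2<0$. Your argument disposes only of the case $D^2>0$ (and even there it fails when $\rho(X)=1$, where the big nef class does span the unique extremal ray, e.g.\ on $\mathbb{P}^2$), and it says nothing about $D^2=0$. That case genuinely occurs: a fiber of a ruled surface is nef, has self-intersection $0$, and spans a $K_X$-negative extremal ray. Indeed the assertion $C_i^2<0$ cannot hold for arbitrary klt surface pairs --- it fails for $(\mathbb{P}^2,0)$ and for Mori fiber spaces --- and is valid precisely when $K_X+B$ is pseudo-effective (the only situation in which the paper uses it, since there $\kappa(K_{X_k}+B_k)\geq 0$). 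In that case the correct argument is immediate and quite different from yours: an irreducible curve with $C^2\geq 0$ on a surface is nef, hence pairs nonnegatively with the pseudo-effective class $K_X+B$, contradicting $(K_X+B)\cdot C<0$. You never invoke pseudo-effectivity, so as written your key claim is both unproved and, in the stated generality, false. A secondary point: deriving rationality of $C_i$ ``from the contraction associated to the extremal ray'' is backwards in positive characteristic, where the existence of the contraction is itself a nontrivial output of the cone/contraction theorem; once $C^2<0$ is known, rationality follows directly from adjunction, since $2p_a(C)-2=(K_X+B)\cdot C+(C-B)\cdot C$ and $(C-B)\cdot C\leq (1-\mathrm{mult}_C B)\,C^2<0$, forcing $p_a(C)=0$ with no contraction needed.
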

\begin{proof} See \cite[3.13, 3.15]{Tan14}.
\end{proof}
\begin{lemma}\label{l-dc} Let  $X$ be a surface over an algebraically closed field $k$ and $(X,B)$ a projective klt pair.
If $R$ is a $K_X+B$ negative extremal ray, then there exists a proper morphism $f:X\to X'$ such that $f_*\mathcal O _X=\mathcal O _{X'}$, $f$ contracts a curve $C\subset X$ if and only if $[C]=R$.\end{lemma}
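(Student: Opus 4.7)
The plan is to produce $f:X\to X'$ as the contraction associated to a supporting nef $\Q$-divisor of $R$, using the cone theorem (Theorem \ref{t-ct}) as the key input and then invoking a basepoint-free/contraction statement valid for klt surfaces in arbitrary characteristic.

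First I would use Theorem \ref{t-ct} to write $R=\R_{\geq 0}[C]$ for an irreducible rational curve $C$ with $(K_X+B)\cdot C<0$ and $C^2<0$. Pick any ample $\Q$-divisor $H$ on $X$. Since $(K_X+B)\cdot C<0$, there is a unique $t>0$ with $(K_X+B+tH)\cdot C=0$; replacing $tH$ by $H$, set $L:=K_X+B+H$. By the second assertion of Theorem \ref{t-ct}, only finitely many extremal rays $R_j=\R_{\geq 0}[C_j]$ satisfy $L\cdot C_j\leq 0$, so after a further small ample perturbation (supported away from $R$) I may assume $L$ is nef and that $L^\perp\cap\overline{NE}(X)=R$.

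The next step is to show that $L$ is semi-ample, so that for $m\gg 0$ the linear system $|mL|$ defines a projective morphism $f:X\to X'$ with $f_*\OO_X=\OO_{X'}$ (after Stein factorization, but the equality will be automatic because the image will be normal). Once this is done, a curve $D\subset X$ is contracted by $f$ iff $L\cdot D=0$ iff $[D]\in L^\perp\cap\overline{NE}(X)=R$, and the numerical criterion forces $[D]=R$ since $D$ is effective and irreducible; conversely any curve whose class lies in $R$ must be proportional to $C$ (as $C^2<0$ makes $R$ an isolated edge of the cone), so $D$ is a multiple of $C$ and is contracted.

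The main obstacle is therefore the semi-ampleness of $L$: in positive or mixed characteristic one cannot appeal to Kawamata--Viehweg vanishing, so the usual basepoint-free argument fails. For klt surfaces, however, this is established by Tanaka in \cite{Tan14}, and I would cite the corresponding basepoint-free/contraction statement there; alternatively, a surface-specific argument works because $C^2<0$ implies the one-curve intersection matrix is negative definite, and Artin-type contractibility yields a normal algebraic contraction of $C$, after which the ampleness of $L$ on $X'$ follows by the Nakai--Moishezon criterion applied to the pushforward. Either route reduces the lemma to a known statement.
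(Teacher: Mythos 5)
Your proposal is correct, but it is worth noting that the paper's entire proof of this lemma is the citation ``See \cite[3.21]{Tan14}'': the contraction theorem for klt surfaces in arbitrary characteristic is simply quoted from Tanaka. Your sketch reconstructs the standard argument behind that result (supporting nef divisor $L=K_X+B+H$ with $L^{\perp}\cap\overline{NE}(X)=R$, then semi-ampleness of $L$), but at the decisive step --- semi-ampleness without Kawamata--Viehweg vanishing --- you also defer to \cite{Tan14}, so in substance the two proofs rest on the same external input and I would count them as essentially the same approach.
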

\begin{proof} See \cite[3.21]{Tan14}.
\end{proof}
\begin{theorem}\label{t-mmps} Let $X$ be a projective surface over a perfect field $k$. Assume that $(X,B)$ is klt and $H$ is an ample $\mathbb Q$-divisor on $X$. Then \begin{enumerate}
\item the ring $R(K_X+B,K_X+B+H)$ is finitely generated, and
\item if $K_X+B$ is pseudo-effective, then there exists a birational morphism $\nu :X\to X'$ and a constant $\epsilon >0$ such that $\nu$ is a $K_X+B+tH$ minimal model for any $0\leq t\leq \epsilon$.\end{enumerate}\end{theorem}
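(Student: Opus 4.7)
The plan is to prove~(2) by running the $(K_X+B)$-MMP with scaling of $H$ and then to deduce~(1) from the existence of a model on which both grading generators of the bigraded ring become semi-ample, using Tanaka's abundance for klt surfaces in positive characteristic.

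For~(2), at the $i$-th stage of the MMP with current model $(X_i,B_i)$ let $\lambda_i=\inf\{\lambda\ge 0 : K_{X_i}+B_i+\lambda H_i\text{ is nef}\}$. By Theorem~\ref{t-ct} (after base change to $\overline k$ and descent using perfectness of $k$), the infimum is attained on a $(K_{X_i}+B_i)$-negative extremal ray, which Lemma~\ref{l-dc} lets me contract. The generating curve has negative self-intersection and $K_{X_i}+B_i$ is pseudo-effective, so each contraction is divisorial and $\rho$ strictly decreases; the MMP therefore terminates in a birational morphism $\nu\colon X\to X'$ with $K_{X'}+B_{X'}$ nef. If $\epsilon$ denotes the last positive scaling value, then $K_{X'}+B_{X'}+tH_{X'}$ is nef for $t\in[0,\epsilon]$ by convexity of nef classes, while $H_i\cdot R_i>0$ at every step makes each contracted ray also $(K_{X_i}+B_i+tH_i)$-negative for $t<\lambda_i$; hence $\nu$ is a log minimal model of $K_X+B+tH$ for every such $t$.

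For~(1), assume first that $K_X+B$ is pseudo-effective. The morphism $\nu$ from~(2) identifies $R(K_X+B, K_X+B+\epsilon H)$ with the analogous ring on $X'$, and a Veronese-type rescaling of the second grading reduces finite generation of $R(K_X+B, K_X+B+H)$ to finite generation of that ring. On $X'$ both grading generators are nef, so by Tanaka's abundance and base point free theorem for klt surfaces (\cite{Tan14}) both are semi-ample; a standard Cartier-multiples argument then shows that the bigraded section ring attached to two semi-ample $\mathbb Q$-divisors on a projective surface is finitely generated. If instead $K_X+B$ is not pseudo-effective, let $t_0>0$ be its pseudo-effective threshold against $H$: then $H^0(m(K_X+B)+n(K_X+B+H))$ vanishes outside the subsemigroup $\{n/(m+n)\ge t_0\}$, and the MMP with scaling applied to a small ample perturbation of $K_X+B+t_0H$ decomposes the relevant interval into finitely many chambers, on each of which a common minimal model makes $K_X+B+tH$ semi-ample and yields a finitely generated graded piece.

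The main obstacle is part~(1) in positive characteristic: one cannot appeal to BCHM, and the reduction to finite generation rests on Tanaka's abundance for klt surfaces combined with a careful assembly of Cartier multiples to handle two semi-ample divisors simultaneously, plus a finite polyhedral chamber decomposition in the non-pseudo-effective case. Secondary technical points are the descent from $\overline k$ to the perfect base field $k$ when invoking Theorem~\ref{t-ct} and Lemma~\ref{l-dc}, and the verification that $H_{X'}=\nu_*H$ remains nef through each divisorial contraction so that the scaling can continue; both are routine for surface MMPs.
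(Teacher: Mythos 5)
Your part (2) and your treatment of the non-pseudo-effective case of part (1) follow essentially the same route as the paper: run the $(K_X+B)$-MMP with scaling of $H$ (which the paper simply quotes from \cite{Tan14}), obtain a finite chain of thresholds $t_0\le t_1\le\dots\le t_n$ with common models $X_i$ on which both $K_{X_i}+B_i+t_{i-1}H_i$ and $K_{X_i}+B_i+t_iH_i$ are nef, hence semiample, deduce finite generation chamber by chamber, and assemble; the paper packages this as the finite generation of $\bigoplus_{i=1}^n R(K_{X_i}+B_i+t_{i-1}H_i, K_{X_i}+B_i+t_iH_i)$ together with its surjection onto $R(K_X+B,K_X+B+H)$.

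However, your argument for part (1) in the pseudo-effective case contains a genuine error. You claim that a ``Veronese-type rescaling of the second grading'' reduces finite generation of $R(K_X+B,K_X+B+H)$ to that of $R(K_X+B,K_X+B+\epsilon H)$. This is backwards: the divisors occurring in $R(K_X+B,K_X+B+H)$ sweep out the cone $\{a(K_X+B)+cH \,:\, 0\le c\le a\}$, while those in $R(K_X+B,K_X+B+\epsilon H)$ sweep out only the subcone $0\le c\le \epsilon a$, and a Veronese subring never enlarges the cone of divisors, so no rescaling recovers the larger ring from the smaller one. (Restriction of a finitely generated divisorial ring to a subcone preserves finite generation, as in \cite[2.25]{CL13}, but that is the opposite implication.) Relatedly, on your model $X'$ only $K_{X'}+B_{X'}+tH_{X'}$ for $t\le\epsilon$ is nef; $K_{X'}+B_{X'}+H_{X'}$ need not be, so the assertion that ``both grading generators are nef on $X'$'' fails for the ring you actually need. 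The repair is exactly the chamber decomposition you already invoke in the non-pseudo-effective case (and which the paper uses uniformly in both cases): run the scaling MMP from $t=1$ down to $t=0$, producing finitely many chambers $[t_{i-1},t_i]$ each with its own common minimal model, prove finite generation on each chamber via semiampleness of the two nef endpoints, and observe that the direct sum of these chamber rings surjects onto $R(K_X+B,K_X+B+H)$.
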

\begin{proof} By \cite{Tan14}, the $K_X+B$ mmp with scaling of $H$ terminates. Therefore there is a sequence of rational numbers $t_0\leq t_1\leq \ldots \leq t_n$ and morphisms $f_i:X\to X_i$ (induced by successive divisorial contractions) such that \begin{enumerate} \item if $t_0>0$, then $\kappa (K_X+B+tH)<0$ for any $0\leq t< t_0$,
\item $R(K_X+B+t_{0}H,
K_X+B+t_iH)\cong R( K_{X_i}+B_i+t_{0}H_i,
K_{X_i}+B_i+t_iH_i)$ where $B_i=f_{i,*}B$ and $H_i=f_{i,*}H$,
\item  $K_{X_i}+B_i+t_{i-1}H_i$ and
$K_{X_i}+B_i+t_iH_i$ are both nef (and hence semiample).\end{enumerate} 
It then follows easily that $$R:= \oplus _{i=1}^n R(K_{X_i}+B_i+t_{i-1}H_i,K_{X_i}+B_i+t_iH_i)$$ is finitely generated (cf. \cite[2.7]{HX15}).
Since $R$ surjects on to $R(K_X+B,K_X+B+H)$, this ring is also finitely generated. 

Finally, if $K_X+B$ is pseudo-effective, then $t_0=0$, $K_{X_1}+B_1+tH_1$ is nef for $0\leq t\leq t_1$ and $X\to X_1$ is given by a sequence of $K_X+B+tH$ divisorial contractions for any $0\leq t <t_1$.\end{proof}  

\begin{proposition}\label{p-mmps} Let $X$ be a projective surface over an algebraically closed field $k$. Assume that $(X,B)$ is a klt pair and $\nu :X'\to X$ is a proper birational morphism such that $(X',B')$ is terminal where $K_{X'}+B'=\nu ^*(K_X+B)$. Let $\Theta =B'-B'\wedge N_\sigma (K_{X'}+B')$ and $\phi ':X'\to X'_M$ the minimal model for $(X',\Theta )$. If $\phi:X\to X_M$ is the minimal model for $(X,B)$, then the rational map $\mu:X'_M\to X_M$ is a morphism and $K_{X'_M}+\phi '_*\Theta =\mu ^*(K_{X_M}+\phi _* B)$.
If $\kappa (K_X+B)=1$ and $B$ is big over ${\rm Proj}R(K_X+B)$, then $\Theta $ is big over ${\rm Proj}R(K_{X'}+\Theta)$.
\end{proposition}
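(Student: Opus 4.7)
The plan is to identify the Nakayama--Zariski decompositions of $K_{X'}+B'$ and of $K_{X'}+\Theta$ through the minimal model map $\phi:X\to X_M$, and then to observe that $\phi'$ contracts only curves already contracted by $\phi\circ\nu$. Since $\Theta\le B'$ and $(X',B')$ is terminal, the pair $(X',\Theta)$ is klt, so by Tanaka's theorem (as in Theorem~\ref{t-mmps}) the minimal model $\phi'$ exists.

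Since $\phi$ is a composition of $K_X+B$-negative divisorial contractions of a surface, the negativity of intersection at each step shows that $E_\phi:=K_X+B-\phi^*(K_{X_M}+\phi_*B)$ is effective and $\phi$-exceptional. Pulling back via $\nu$ yields
$$K_{X'}+B' = (\phi\circ\nu)^*(K_{X_M}+\phi_*B)+\nu^*E_\phi,$$
whose first summand is nef and whose second is effective with support in the (negative definite) $(\phi\circ\nu)$-exceptional locus. By the projection formula each component of $\nu^*E_\phi$ is orthogonal to the first summand, so by uniqueness of the Zariski decomposition on a surface $N:=N_\sigma(K_{X'}+B')=\nu^*E_\phi$. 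The identity
$$K_{X'}+\Theta = (\phi\circ\nu)^*(K_{X_M}+\phi_*B)+(N-B'\wedge N),$$
combined with $\Supp(N-B'\wedge N)\subseteq \Supp(N)$, gives in the same way $N_\sigma(K_{X'}+\Theta)=N-B'\wedge N$.

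By Theorem~\ref{t-ct} and Lemma~\ref{l-dc}, $\phi'$ is a composition of $K_{X'}+\Theta$-negative divisorial contractions, so its exceptional locus is precisely $\Supp(N-B'\wedge N)$, contained in the $(\phi\circ\nu)$-exceptional locus. Hence $\phi\circ\nu$ is constant on every $\phi'$-fiber, and the rigidity lemma produces a unique morphism $\mu:X'_M\to X_M$ with $\phi\circ\nu=\mu\circ\phi'$. Pushing the decomposition of $K_{X'}+\Theta$ forward via $\phi'$, using $(\phi\circ\nu)^*=(\phi')^*\mu^*$, $\phi'_*(\phi')^*=\operatorname{id}$, and $\phi'_*(N-B'\wedge N)=0$, gives $K_{X'_M}+\phi'_*\Theta=\mu^*(K_{X_M}+\phi_*B)$.

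For the bigness claim, this pullback identity implies $R(K_{X'}+\Theta)\cong R(K_X+B)$, so both Proj's agree with a common curve $Y$. A general fiber $G$ of $X'\to Y$ meets neither the $\nu$-exceptional locus nor $\nu^{-1}(\Supp E_\phi)$, since each of these is contracted to finitely many points of $Y$. Thus $\nu$ restricts to an isomorphism from $G$ onto a general fiber of $X\to Y$, and $N|_G=\nu^*E_\phi|_G=0$, so $(B'\wedge N)|_G=0$ and $\Theta|_G=B|_{\nu(G)}$ is big by hypothesis. The main technical step is the identification $N=\nu^*E_\phi$ via uniqueness of Zariski decomposition; once it is in hand, the morphism $\mu$, the pullback identity, and the bigness of $\Theta$ all follow by routine diagram chasing.
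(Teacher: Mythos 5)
Your proposal is correct and follows essentially the same route as the paper: both identify $N_\sigma(K_{X'}+B')=\nu^*E$ via the Zariski decomposition of $\psi^*(K_{X_M}+\phi_*B)+\nu^*E$, deduce that $\phi'$ contracts exactly $\Supp\bigl(\nu^*E-B'\wedge\nu^*E\bigr)$ so that $\psi$ factors through $\phi'$, and then verify the pullback identity and the bigness of $\Theta$ by a projection-formula computation on general fibers over ${\rm Proj}\,R(K_X+B)$. The only cosmetic differences are that you invoke rigidity to produce $\mu$ and restrict to a general fiber on $X'$, whereas the paper pushes forward to $X'_M$ and uses the negativity lemma to kill $\phi'_*E'$.
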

\begin{proof} Consider the morphism $\psi:X'\to X_M$. Since $K_X+B=\phi ^*(K_{X_M}+\phi _*B)+E$, then $K_{X'}+B'=\psi ^*(K_{X_M}+\phi _*B) +\nu ^*E$ where $K_{X_M}+\phi _*B$ is nef and $\nu ^*E$ is effective and $\psi$ exceptional. It follows that 
$N_\sigma (K_{X'}+B')=\nu ^*E$ and so $K_{X'}+\Theta =\psi ^* (K_{X_M}+\phi _*B)+E'$ where $0\leq E'\leq \nu ^*E$. In particular the divisors contracted by $\phi '$ are precisely the divisors contained in ${\rm Supp} (E')$ and so $X'\to X_M$ factors through $\phi '$. 
We have $K_{X'_M}+\phi '_* \Theta =\mu ^*(K_{X_M}+\phi _*B)+\phi '_* E'$ where $\mu _* (\phi '_*E')\leq \phi _* E=0$ and hence $\phi'_*E $ is $\mu$ exceptional. By the negativity lemma, it follows that $\phi '_* E'=0$.

Note that since $H^0(m(K_X+B))\cong H^0(m(K_{X'}+\Theta ))$ for all $m\geq 0$, it follows that $Z:={\rm Proj}R(K_X+B)={\rm Proj}R(K_{X'}+\Theta)$. The bigness of $B$ over $Z$ is equivalent to $B\cdot X_z>0$ for general $z\in Z$. But then
$$\Theta \cdot X'_z=\mu _* \phi '_*\Theta \cdot (X'_M)_z=\phi _*B\cdot (X'_M)_z=B\cdot X_z>0$$ and so $\Theta$ is big over $Z$.
\end{proof}

In what follows $R$ will denote a DVR with residue field $k$ and fraction field $K$. If $f:X\to {\rm Spec}(R)$ is a morphism, then we let $X_K=X\times _{{\rm Spec}(R)}{\rm Spec}(K)$ be the generic fiber and $X_k= X\times _{{\rm Spec}(R)}{\rm Spec}(k)$ be the special fiber. We say that a pair $(X,B)$ is log smooth over $R$ if $X$ and each strata of the support of $B$ are smooth over ${\rm Spec}( R)$.
\begin{lemma}\label{l-open} Let $f:X\to {\rm Spec}(R)$ be a smooth projective morphism from a normal variety to a DVR and $L$ a line bundle on $X$, then $L$ is ample (resp. nef) if and only if so is $L_k:=L|_{X_k}$.\end{lemma}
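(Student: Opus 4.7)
Both ``only if'' directions are immediate: ampleness passes to closed subschemes, and every proper curve $C\subset X_k$ is also a proper curve in $X$ with $L_k\cdot C = L\cdot C$, so $L_k$ nef follows from $L$ nef.

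For the ample ``if'' direction, the plan is to invoke the openness of ampleness in proper families (EGA IV, 9.6.4): the locus $\{s\in {\rm Spec}(R) : L_s \text{ is ample on } X_s\}$ is open in the base. By hypothesis it contains the unique closed point of ${\rm Spec}(R)$, and the only open neighborhood of that point is ${\rm Spec}(R)$ itself, so the locus equals ${\rm Spec}(R)$ and $L$ is $f$-ample. Because ${\rm Spec}(R)$ is affine, $f$-ampleness is equivalent to ampleness of $L$ on the total space $X$: a sufficiently high tensor power of $L$ then embeds $X$ as a closed subscheme of a projective $R$-space.

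For the nef ``if'' direction, I would reduce to the ample case by a standard perturbation. Pick an ample line bundle $A$ on $X$, which exists since $f$ is projective and the base is affine. For every rational $\epsilon > 0$, the restriction $(L+\epsilon A)_k = L_k + \epsilon A_k$ is ample on $X_k$ by the elementary fact over the field $k$ that nef plus ample is ample. Applying the ample case already proved, $L+\epsilon A$ is ample on $X$ for every such $\epsilon$. Testing against any proper integral curve $C\subset X$, the inequality $(L+\epsilon A)\cdot C > 0$ for all small positive rational $\epsilon$ forces $L\cdot C\geq 0$ in the limit, and hence $L$ is nef.

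The principal ingredient is the openness theorem for ampleness; once this is available the rest is formal. The only mild subtlety is that intersection numbers $L\cdot C$ must be interpreted for horizontal proper integral curves $C\subset X$ (those mapping surjectively to ${\rm Spec}(R)$), but standard intersection theory on the Noetherian projective scheme $X$ handles this uniformly, so the limiting argument goes through without incident.
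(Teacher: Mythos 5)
Your proof is correct and follows the same route as the paper: openness of ampleness in proper families (plus the fact that the only open subset of ${\rm Spec}(R)$ containing the closed point is ${\rm Spec}(R)$ itself) for the ample case, and perturbation by an ample $H$ with a limiting intersection-number argument for the nef case. Your write-up simply makes explicit the references and the horizontal-curve bookkeeping that the paper leaves implicit.
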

\begin{proof} Clearly, if $L$ is ample or nef, then so is $L_k$. It is well known that ampleness is an open condition and so if $L_k$ is ample then so is $L$. Finally if $L_k$ is nef and $H$ is ample, then $L_k+tH_k$ is ample for any $t>0$, so that $L+tH$ is ample and hence $L$ is nef.
\end{proof}
\begin{lemma} Let $(X,B)$ be a log pair which is log smooth over ${\rm Spec }(R)$ where $R$ is a DVR. If $R\subset \tilde R$ is an inclusion of DVR's, then $(X_{\tilde R},B_{\tilde R})$ is log smooth over $ {\rm Spec }(\tilde  R)$. If $(X,B)$ is terminal (resp. klt), then so is $(X_{\tilde R},B_{\tilde R})$.\end{lemma}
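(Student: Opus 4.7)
The strategy is direct: both conclusions follow because the defining properties (smoothness and the coefficient inequalities on a log resolution) are preserved under the base change $\Spec \tilde R \to \Spec R$, and a log smooth pair serves as its own log resolution.

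First I would check log smoothness. Smoothness is stable under arbitrary base change, so $X_{\tilde R}\to \Spec \tilde R$ is smooth. Writing $B=\sum b_iB_i$, each irreducible component $B_i$ is smooth over $R$; by flatness of $R\to \tilde R$ and preservation of smoothness, the pullback $(B_i)_{\tilde R}$ is a smooth divisor on $X_{\tilde R}$. An irreducible $B_i$ may split into several irreducible components after base change, but each such component is smooth over $\tilde R$ of the correct codimension. More generally, every stratum of $\Supp(B_{\tilde R})$ is the pullback of a stratum of $\Supp(B)$ (possibly decomposed into smaller irreducible pieces), and each such pullback is smooth over $\tilde R$. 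Hence $(X_{\tilde R},B_{\tilde R})$ is log smooth over $\Spec\tilde R$.

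Next I would track the coefficients. Since $B_i$ is an effective Cartier divisor and $X_{\tilde R}\to X$ is flat, the scheme-theoretic pullback $(B_i)_{\tilde R}$ has each irreducible component appearing with multiplicity one. Thus $B_{\tilde R}=\sum b_i(B_i)_{\tilde R}$, and the set of nonzero coefficients of $B_{\tilde R}$ equals the set of nonzero coefficients of $B$. In particular $\lfloor B_{\tilde R}\rfloor$ is the pullback of $\lfloor B \rfloor$.

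Finally, since $(X,B)$ is log smooth, the identity $X\to X$ is a log resolution of the pair with no exceptional divisors. Thus the klt (resp.\ terminal) condition reduces to $\lfloor B\rfloor\le 0$ (resp.\ $\lfloor B\rfloor \le 0$, since $E=0$), i.e.\ to a coefficient condition on $B$ alone. By Step~1 the identity on $X_{\tilde R}$ is also a log resolution of $(X_{\tilde R},B_{\tilde R})$ with empty exceptional locus, and by Step~2 the coefficients of $B_{\tilde R}$ are exactly the $b_i$, so the corresponding coefficient condition is inherited. Hence $(X_{\tilde R},B_{\tilde R})$ is klt (resp.\ terminal). The only mild subtlety is the possible splitting of components of $B$ under base change, handled as above; otherwise the proof is a straightforward application of base change preservation of smoothness.
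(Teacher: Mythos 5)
Your treatment of log smoothness and of the behaviour of the coefficients under base change is fine and matches the paper, and the klt case goes through (for a log smooth pair, ``all coefficients $<1$'' really is equivalent to klt, though this equivalence itself requires checking the discrepancies on \emph{all} log resolutions, not just the identity, since the definition quantifies over every log resolution).

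The gap is in the terminal case. You assert that because the identity is a log resolution with $E=0$, terminality also ``reduces to $\lfloor B\rfloor\le 0$.'' But the identity resolution has no exceptional divisors, so it detects nothing about the terminal condition beyond the klt-type bound on the coefficients of $B$; the real content of terminality comes from the other log resolutions. Concretely, blowing up a stratum $B_i\cap B_j$ where two components meet produces an exceptional divisor whose coefficient in $B_{X'}$ is $b_i+b_j-1$, so for a log smooth pair terminality is equivalent to: all $b_i<1$ \emph{and} $b_i+b_j<1$ whenever $B_i\cap B_j\neq\emptyset$. This is exactly the characterization the paper's proof uses. Your argument, as written, would declare a log smooth pair with two components of coefficient $3/4$ meeting each other to be terminal, which it is not. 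The lemma is still true with your setup: if two components of $B_{\tilde R}$ meet, their images in $X$ meet, so the inequality $b_i+b_j<1$ descends from $(X,B)$ and ascends to $(X_{\tilde R},B_{\tilde R})$ — but this extra condition must be stated and checked; it does not follow from the coefficient bound $\lfloor B\rfloor\le 0$ alone.
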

\begin{proof}Since smoothness is preserved by base change, it follows that $(X_{\tilde R},B_{\tilde R})$ is log smooth over $ {\rm Spec }(\tilde R)$. The pair $(X,B)$ is klt (resp. terminal) if and only if the coefficients of $B$ are $<1$ (resp.  the coefficients of $B$ are $<1$ and if two components intersect, then the sum of the coefficients is $<1$). Since there is a one to one correspondence of components of $B$ with components of $B_k$, the lemma follows. \end{proof}
\begin{theorem}\label{t-KU} [Katsura-Ueno \cite{KU85}] Let $f:X\to {\rm Spec}(R)$ be an algebraic space which is proper, separated of finite type and two dimensional over $ {\rm Spec}(R)$ where $R$ is a DVR with algebraically closed residue field $k$ and field of fractions $K$. If $X_k$ contains an exceptional curve of the first kind $e\subset X_k$ then there exists a DVR $\tilde R\supset R$ with residue field $k$ and a surjective proper morphism $\pi :X\times {\rm Spec}(\tilde R)\to \tilde Y$ over ${\rm Spec}(\tilde R)$ where $\tilde Y\to {\rm Spec}(\tilde R)$  is proper, separated of finite type and two dimensional, $\pi _k$ contracts the exeptional curve of the first kind $e\subset X_k$ and $\pi _K:X_{\tilde K}\to \tilde Y_{\tilde K}$ is also a contraction of an exeptional curve of the first kind.\end{theorem}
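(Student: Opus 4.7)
The plan is to first lift $e$ to a horizontal $\mathbb{P}^1$-family $E\subset X\times _{{\rm Spec}(R)}{\rm Spec}(\tilde R)$ after a suitable base change $\tilde R\supset R$ with the same residue field $k$, and then contract $E$ relatively over ${\rm Spec}(\tilde R)$. Since $X\to {\rm Spec}(R)$ is smooth of relative dimension two, $X$ is a regular scheme of dimension three and $e\cong \mathbb{P}^1_k$ has codimension two in $X$, so the relevant deformation object is the normal bundle $N_{e/X}$.

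First I would analyze $N_{e/X}$ via the conormal sequence for $X_k\subset X$:
$$0 \to N_{e/X_k} \to N_{e/X} \to (N_{X_k/X})|_e \to 0.$$
Here $N_{e/X_k} = \mathcal{O}_e(-1)$ because $e^2=-1$ on the smooth surface $X_k$, and $N_{X_k/X}=\mathcal{O}_{X_k}$ since $X_k$ is the principal divisor of a uniformizer of $R$. The vanishing $H^1(\mathbb{P}^1,\mathcal{O}(-1))=0$ splits this extension, yielding $N_{e/X}\cong \mathcal{O}(-1)\oplus \mathcal{O}$ and hence $h^0(N_{e/X})=1$, $h^1(N_{e/X})=0$. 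Standard deformation theory then shows $\Hilb_{X/R}$ is étale over ${\rm Spec}(R)$ at $[e]$. Taking $\tilde R$ to be the Henselization of $R$ (still a DVR with residue field $k$, since $k$ is algebraically closed) produces a section through $[e]$, and hence a flat family $E\hookrightarrow X\times _{{\rm Spec}(R)}{\rm Spec}(\tilde R)$ with $E_k=e$ and $E_{\tilde K}$ a smooth rational curve. Flatness of $E$ and local constancy of $\deg(\mathcal{O}(E)|_{E_t})$ force $E_{\tilde K}^2=-1$, so $E_{\tilde K}$ is itself an exceptional curve of the first kind in $X_{\tilde K}$.

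Second I would construct the contraction $\pi: X\times _{{\rm Spec}(R)}{\rm Spec}(\tilde R)\to \tilde Y$. Fiberwise, Castelnuovo's criterion contracts $E_t$ to a smooth point of a smooth surface $Y_t$, and these must be glued into a single morphism of algebraic spaces. A concrete candidate: choose a relatively ample Cartier divisor $H$ on $X\times _{{\rm Spec}(R)}{\rm Spec}(\tilde R)$, put $c=H\cdot E_t$ (constant in $t$ by flatness), and set $L=n(H+cE)$ for $n\gg 0$. A short computation gives $L_t\cdot E_t=0$ and $L_t\cdot C>0$ for every other curve $C\subset X_t$, so $L_t$ is nef on every fiber and contracts precisely $E_t$. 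When $L$ is relatively semiample, $\tilde Y:=\Proj_{\tilde R}\bigoplus_m \pi_*L^m$ does the job; more robustly one invokes Artin's theorem on contractibility of curves with negative-definite intersection form in algebraic spaces, which is precisely why $\tilde Y$ must be allowed to be an algebraic space rather than a scheme.

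The main obstacles are (i) producing the section of $\Hilb_{X/R}$ through $[e]$, which is what forces the base change to $\tilde R$ (a Henselization or a finite tame extension suffices, since étale neighborhoods acquire sections after such base changes), and (ii) promoting the fiberwise Castelnuovo contractions to a global morphism of algebraic spaces, where semiampleness of $L$ in mixed characteristic is delicate and Artin's contractibility criterion is the natural replacement.
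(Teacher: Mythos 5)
The paper offers no argument for this statement---it is quoted directly from \cite{KU85}---so the only thing to compare against is the original source, and your sketch is essentially the Katsura--Ueno argument. The first half is correct and well organized: $N_{e/X_k}\cong\mathcal O_{\mathbb P^1}(-1)$ has vanishing $h^0$ and $h^1$, so $\Hilb_{X/R}\to\Spec(R)$ is \'etale at $[e]$; passing to the Henselization $\tilde R$ (a DVR containing $R$ with residue field $k$) yields a section through $[e]$ and hence a horizontal family $E\subset X_{\tilde R}$, which is a divisor in the threefold $X_{\tilde R}$, so that $\deg(\mathcal O(E)|_{E_t})$ and $\deg(\omega_{X/\tilde R}|_{E_t})$ make sense and are constant; together with constancy of $\chi(\mathcal O_{E_t})$ and smoothness of $E\to\Spec(\tilde R)$ this shows $E_{\tilde K}$ is again an exceptional curve of the first kind. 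Note that you have silently added the hypothesis that $X\to\Spec(R)$ is smooth; this matches Katsura--Ueno's actual hypotheses and the way the theorem is invoked in this paper (all intermediate models there are smooth over $R$), but it is not in the statement as printed, and your normal-bundle computation genuinely uses it.

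The one thin spot is the contraction. Fiberwise nefness of $L_t=n(H_t+cE_t)$ does not give relative semiampleness of $L$ over $\tilde R$, so your ``concrete candidate'' is not a proof; and the Artin result you name (contractibility of a negative-definite configuration of curves on a surface over a field) is not the right one here, since what must be contracted is the two-dimensional divisor $E\subset X_{\tilde R}$ onto a section of $\tilde Y\to\Spec(\tilde R)$. The correct tool is Artin's existence theorem for contractions via algebraization of formal modifications: one first produces the Castelnuovo contraction over every infinitesimal thickening $X\times\Spec(\tilde R/\mathfrak m^{n+1})$---the obstructions to doing so lie in groups of the form $H^1(e,\operatorname{Sym}^m N^\vee_{e/X})$, which vanish because $N^\vee_{e/X}\cong\mathcal O(1)\oplus\mathcal O$---and then algebraizes the resulting formal contraction to a proper morphism of algebraic spaces $\pi:X_{\tilde R}\to\tilde Y$. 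This is precisely why $\tilde Y$ must be allowed to be an algebraic space. With that substitution your outline is complete and agrees with the cited proof.
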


\section{Main result}
\begin{theorem}\label{t-definv} Let $(X,B)$ be a klt pair which is log smooth, projective of dimension $2$ over a DVR $R$ with perfect residue field $k$ of characteristic $p>0$ and perfect fraction field $K$. If $K_X+B$ is $\Q$-Cartier then  $\kappa (K_{X_k}+B_k)=\kappa (K_{X_K}+B_K)$ and if either $\kappa (K_{X_k}+B_k)\ne 1$ or $\kappa (K_{X_k}+B_k)= 1$ and $B_k$ is big over ${\rm Proj}R(K_{X_k}+B_k)$, then there exists an integer $m_0$ such that for any positive integer  $m\in m_0\N$ we have
$$h^0(m (K_{X_K}+B_K))=h^0(m (K_{X_k}+B_k)).$$
\end{theorem}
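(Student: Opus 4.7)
The plan is to follow the strategy outlined after Theorem 1.1: reduce to a terminal pair on the special fiber with no component of $B_k$ in the stable base locus, lift each step of the MMP on $X_k$ to a contraction of a $(-1)$-curve on the family via Theorem \ref{t-KU}, and then compute plurigenera at the relative minimal model. By flat base change we may assume throughout that the residue field $k$ is algebraically closed, since $\kappa$ and plurigenera are preserved under flat extension of the ground field.

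For the reduction, take a log resolution $\nu\colon X'\to X$ of $(X,B)$ obtained as a sequence of blowups along smooth strata; log smoothness over $R$ is preserved. Write $K_{X'}+B'=\nu^*(K_X+B)$ and let $\Theta$ on $X'$ be defined by setting $\Theta_k=B'_k-B'_k\wedge N_\sigma(K_{X'_k}+B'_k)$ on the special fiber and spreading out via the natural bijection between components of $\Theta_k$ and components of $B'$. By Proposition \ref{p-mmps}, $R(K_{X'_k}+\Theta_k)=R(K_{X_k}+B_k)$ and similarly on the generic fiber, so Kodaira dimensions and plurigenera are unchanged; moreover the bigness hypothesis in the case $\kappa=1$ passes to $\Theta_k$. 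After replacing $(X,B)$ by $(X',\Theta)$ we may thus assume $(X_k,B_k)$ is terminal and no component of $B_k$ is contained in $\mathbf{B}(K_{X_k}+B_k)$.

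Next I run the $(K_{X_k}+B_k)$-MMP with scaling of an ample divisor on the special fiber. Each step is a divisorial contraction (Lemma \ref{l-dc}) of a curve $C$ with $(K_{X_k}+B_k)\cdot C<0$; necessarily $C\subset\mathbf{B}(K_{X_k}+B_k)$, so by the reduction $C$ is not a component of $B_k$ and therefore $B_k\cdot C\geq 0$. Hence $K_{X_k}\cdot C<0$, and since $X_k$ is smooth with $(X_k,B_k)$ terminal, $C$ is a $(-1)$-curve. By Theorem \ref{t-KU}, after replacing $R$ by a DVR extension $\tilde R$ (which preserves all hypotheses by the base-change lemma in Section 2), the contraction of $C$ lifts to $\pi\colon X_{\tilde R}\to\tilde Y$ over $\mathrm{Spec}(\tilde R)$ whose restriction to the generic fiber also contracts a $(-1)$-curve. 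Iterating, and using termination of the MMP on the special fiber, produces a relative minimal model $Y/\tilde R$ with $K_{Y_k}+B_{Y_k}$ nef, hence $K_Y+B_Y$ nef on $Y$ by Lemma \ref{l-open}.

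Finally I compute plurigenera at the relative minimal model using abundance for klt surfaces over perfect fields to obtain semi-ampleness on each fiber. When $\kappa=0$, both plurigenera equal $1$ for sufficiently divisible $m$; when $\kappa=2$, Riemann--Roch together with Kawamata--Viehweg-type vanishing on surfaces gives $h^0(m(K_Y+B_Y))=\tfrac{m^2}{2}(K_Y+B_Y)^2+O(m)$ on each fiber, with leading term constant by flatness of the intersection form. The delicate case is $\kappa=1$: using the Iitaka fibration $Y_k\to Z_k$, the bigness of $B_k$ on its general fiber lets one compute $h^0(m(K_{Y_k}+B_{Y_k}))$ via a direct image on $Z_k$ that matches the analogous direct image computation on the generic fiber through cohomology and base change. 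This is the main obstacle, since the counterexamples cited in the introduction (Enriques surfaces in characteristic $2$, certain elliptic surfaces) live precisely in $\kappa=1$ without bigness; the bigness hypothesis must be exploited to ensure that the pushforward along the Iitaka fibration behaves compatibly in the family.
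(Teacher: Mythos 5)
Your overall strategy is the paper's: pass to a terminal model $(X',\Theta)$ with $\Theta_k=B'_k-B'_k\wedge N_\sigma(K_{X'_k}+B'_k)$ so that no component of the boundary lies in the stable base locus, observe that every $(K_{X_k}+B_k)$-negative extremal curve is then $K_{X_k}$-negative and hence a $(-1)$-curve, lift each contraction to the family by Theorem \ref{t-KU} after a DVR extension, and conclude at the relative minimal model. Two remarks on the reduction. First, your assertion that the replacement $(X,B)\rightsquigarrow(X',\Theta)$ leaves the plurigenera unchanged ``similarly on the generic fiber'' is not justified: $\Theta$ is defined by a condition on the \emph{special} fiber and spread out, so a priori one only has $h^0(m(K_{X'_K}+\Theta_K))\leq h^0(m(K_{X_K}+B_K))$. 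The paper closes this by a sandwich: semicontinuity gives $h^0(m(K_{X_k}+B_k))\geq h^0(m(K_{X_K}+B_K))\geq h^0(m(K_{X'_K}+\Theta_K))$, and the last term equals $h^0(m(K_{X'_k}+\Theta_k))=h^0(m(K_{X_k}+B_k))$ by the terminal case, forcing equality throughout. You should add this step (and dispose of $\kappa=-\infty$ by semicontinuity at the outset, since your claim that a $(K_{X_k}+B_k)$-negative curve lies in $\mathbf{B}(K_{X_k}+B_k)$ presupposes $\kappa\geq 0$).

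The genuine gaps are in the endgame. In the case $\kappa=2$ you conclude only that $h^0(m(K_{Y_k}+B_{Y_k}))=\tfrac{m^2}{2}(K_{Y_k}+B_{Y_k})^2+O(m)$ with matching leading term; that gives asymptotic agreement of plurigenera, not the asserted equality. The correct assembly of your own ingredients is: vanishing of higher cohomology on the special fiber for $m\in m_0\N$ (\cite[2.6]{Tan15}), semicontinuity to get the same vanishing on the generic fiber, and then $h^0=\chi$ on both fibers with $\chi$ constant by flatness. More seriously, in the case $\kappa=1$ your proposed argument via direct images along the Iitaka fibration and ``cohomology and base change'' is exactly the route blocked by the counterexamples you cite (wild fibers of elliptic fibrations in characteristic $p$); identifying the obstacle is not the same as overcoming it. The paper's actual mechanism is different: bigness of $\bar B_k$ over ${\rm Proj}\,R(K_{\bar X_k}+\bar B_k)$ makes $\bar B_k+K_{\bar X_k}+\bar B_k$ big, so one writes it as ample plus effective, forms a klt pair $(\bar X_k,(1-\epsilon)\bar B_k+\epsilon\bar E_k)$ with trivial multiplier ideal, and applies the vanishing theorem \cite[0.3]{Tan15} (with $N=L=m(K_{\bar X_k}+\bar B_k)$ nef and not numerically trivial, and $L-(K_{\bar X_k}+\Delta_k)$ ample) to kill higher cohomology; one then concludes again by semicontinuity and constancy of $\chi$. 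Without some such vanishing input, the $\kappa=1$ case of your proof does not go through. Finally, note that the unconditional equality $\kappa(K_{X_k}+B_k)=\kappa(K_{X_K}+B_K)$ in the $\kappa=1$ case still needs an argument (nefness of $K_{\bar X_K}+\bar B_K$, abundance over $K$, and $K_{\bar X_K}+\bar B_K\not\equiv 0$ via intersection with an ample divisor), which your sketch omits.
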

\begin{proof} Consider an inclusion of DVR's $R\subset \tilde R$. If $\tilde k$ and $\tilde K$ denote the residue field and the fraction field of $\tilde R$, then $h^0(m (K_{X_k}+B_k))=h^0(m (K_{ X_{\tilde k}}+B_{\tilde k}))$ and $h^0(m (K_{X_K}+B_K))=h^0(m (K_{ X_{\tilde K}}+B_{\tilde K}))$. Thus we are free to replace $X\to R$ by a base  extension $\tilde X=X\times _R\tilde R\to \tilde R$. In particular we may assume that $k$ is algebraically closed.

If $h^0(m (K_{X_k}+B_k))=0$, then by semicontinuity $h^0(m (K_{X_K}+B_K))=0$. Therefore, the theorem holds trivially in the case $\kappa (K_{X_k}+B_k)=-\infty $.  Thus we may assume that $\kappa (K_{X_k}+B_k)\geq 0$.

\begin{claim}\label{c-1} The theorem holds under the additional assumption that $(X_k,B_k)$ is terminal and no component of the support of $B_k$ is contained in $\bB (K_{X_k}+B_k)$.\end{claim}
\begin{proof} Since $k$ is algebraically closed, then by the Cone Theorem (Theorem \ref{t-ct}) $$\overline {NE}(X_k)=\overline {NE}(X_k)_{K_{X_k}+B_k\geq 0}+\sum _{i\in I}\R _{\geq 0}C_i$$
where $I$ is countable, $(K_{X_k}+B_k)\cdot C_i<0$ and $C_i$ is rational and not  contained in the support of $B_k$. 
Notice in fact that if $C_i$ is contained in the support of $B_k$, then since  $(K_{X_k}+B_k)\cdot C_i<0$ we have $C_i\subset \bB (K_{X_k}+B_k)$, which we have assumed is impossible. Note then that $C_i\cdot B_k\geq 0$ and so $K_{X_k}\cdot C_i<0$ 
and hence if $C_i$ spans a $K_{X_k}+B_k$-negative extremal ray, then it also spans a  $K_{X_k}$-negative extremal ray and so it  can be contracted by a divisorial contraction of an exceptional curve of the first kind $X_k\to X'_k$. In particular $X'_k$ is 
also a smooth surface. 
Thus we may assume that $C_i$ is an extremal curve of the first kind. By Theorem \ref{t-KU} (after extending $R$), we may assume that there is a morphism $X\to X'$ 
of smooth surfaces over $R$ such that $X_K\to X'_K$ also contracts  an extremal curve of the first kind.

Suppose now that $\nu :X\to \bar X$ is a morphism of smooth surfaces over ${\rm Spec }(R)$ such that $X_K\to \bar X_K$ and $X_k\to \bar X_k$ are given by a finite sequence of contractions of  extremal curves of the first kind such that the exceptional locus of $X_k\to \bar X_k$ contains no components of $B_k$.
Then $(\bar X_k,\bar B_k)$ is terminal and $K_{X_k}+B_k=\nu _k^*(K_{\bar X_k,}+\bar B_k)+F_k$ where $B_k=\nu _{k,*}^{-1}\bar B_k$ and $B_k\wedge F_k=0$. In particular $\bB (K_{ X_k}+ B_k)=\bB (K_{\bar X_k}+\bar B_k)+F_k$.
 Suppose that $C\subset \bar X_k$
is contained in $\bB (K_{\bar X_k}+\bar B_k)\cap {\rm Supp}(\bar B_k)$, then $\nu ^{-1}_*C\subset \bB (K_{X_k}+B_k)\cap {\rm Supp}(B_k)$ which is impossible. Therefore, if $K_{\bar X_k}+\bar B_k$ is not nef, we can continue to contract exceptional curves of the first kind. Since each contraction reduces the Picard number of the central fiber $X_k$ by one, this procedure must terminate after finitely many steps. We may therefore assume that $K_{\bar X_k}+\bar B_k$ is semiample. In particular $K_{\bar X_k}+\bar B_k$ is nef and hence so is $K_{\bar X}+\bar B$ (see Lemma \ref{l-open}).

Suppose now that $\nu (K_{X_k}+B_k)=2$. In this case $K_{\bar X_k}+\bar B_k$ is nef and big so that there exists an integer $m_0>0$ such that $h^i(m(K_{\bar X_k}+\bar B_k))=0$ for all $m\in m_0\N$ and all $i>0$ (see \cite[2.6]{Tan15}). By semicontinuity, we also have $h^i(m(K_{\bar X_K}+\bar B_K))=0$ for all $m\in m_0\N$. But then, by flatness,
$$h^0(m(K_{X_K}+ B_K))=h^0(m(K_{\bar X_K}+\bar B_K))=\chi (m(K_{\bar X_K}+\bar B_K))=\qquad$$ $$\qquad \chi (m(K_{\bar X_k}+\bar B_k))=h^0(m(K_{\bar X_k}+\bar B_k))=h^0(m(K_{X_k}+ B_k)).$$

Suppose that $\kappa (K_{X_k}+B_k)=0$. Then we have $K_{\bar X_k}+\bar B_k\sim _\Q 0$. By Lemma \ref{l-open}, it follows that $\pm (K_{\bar X_K}+\bar B_K)$ is nef  and hence that $K_{\bar X_K}+\bar B_K\equiv 0$. By \cite[1.2]{Tan14}, $K_{\bar X_K}+\bar B_K\sim _\Q 0$. Thus there exists an integer $m_0>0$ such that $m_0(K_{\bar X_K}+\bar B_K)\sim 0$ and $m_0(K_{\bar X_k}+\bar B_k)\sim 0$.
Thus $h^0(m(K_{X_K}+ B_K))=h^0(m(K_{X_k}+ B_k))$  for all $m\geq 0$ divisible by $m_0$.

Suppose that $\kappa (K_{X_k}+B_k)=1$. Since $ K_{\bar X_k}+\bar B_k$ is nef, so is $K_{\bar X_K}+\bar B_K$. In particular $\kappa (K_{\bar X_K}+\bar B_K)\geq 0$ and thus, by semicontinuity we have  $\kappa (K_{\bar X_K}+\bar B_K)\in \{0,1\}$. 
 Let $H$ be a sufficiently ample divisor on $\bar X$. Then 
$(K_{\bar X_K}+\bar B_K)\cdot H_K=(K_{\bar X_k}+\bar B_k)\cdot H_k>0$ so $K_{\bar X_K}+\bar B_K\not\equiv 0$.
Therefore  $\kappa (K_{\bar X_K}+\bar B_K)= 1$.

 Finally, suppose that  $\kappa (K_{X_k}+B_k)=1$ and $B_k$ is big over ${\rm Proj}R(K_{X_k}+B_k)$. 
Note that  $\bar B_k$ is also big over ${\rm Proj}R(K_{X_k}+B_k)$ and hence $\bar B_k+K_{\bar X_k}+\bar B_k$ is big. Thus
we may write $\bar B_k+K_{\bar X_k}+\bar B_k\sim _\Q \bar A_k+\bar E_k$ where $\bar A_k$ is ample and $\bar E_k$ is effective. For any rational number $0<\epsilon \ll 1$, the pair $(\bar X _k, \Delta _k=(1-\epsilon)\bar B_k+\epsilon \bar E_k)$ is Kawamata log terminal and so the corresponding multiplier ideal sheaf is trivial
$\mathcal J (\Delta_k)=\mathcal O _{\bar X _k}$. If $L=N=m(K_{\bar X_k}+\bar B_k)$, then $N$ is nef and not numerically equivalent to zero while $L-( K_{\bar X_k}+\Delta_k)\sim _\Q (m-1-\epsilon )(K_{\bar X_k}+\bar B_k)+  \epsilon \bar A_k$ is ample and so by \cite[0.3]{Tan15},
$H^i(\mathcal O _{\bar X_k}(m(l+1)(K_{\bar X_k}+\bar B_k)))=0$ for $i>0$ and $l\gg 0$.  By semicontinuity, $H^i(\mathcal O _{\bar X_K}(m(l+1)(K_{\bar X_K}+\bar B_K)))=0$ for $i>0$ and $l\gg 0$ and hence $h^0(\mathcal O _{\bar X_k}(m(l+1)(K_{\bar X_k}+\bar B_k)))=h^0(\mathcal O _{\bar X_K}(m(l+1)(K_{\bar X_K}+\bar B_K)))$.
\end{proof}
We will now consider the general case. 
Since $(X,B)$ is log smooth over $R$, there is a sequence of blow ups along strata of $\mathbf M_B$ say $\nu : X'\to X$ such that $K_{X'}+B'=\nu ^*(K_X+B)$ is terminal and in particular $B'\geq 0$ and $(X',B')$ is log smooth. Since $R(K_{X'_k}+B'_k)\cong R(K_{X_k}+B_k)$ is finitely generated, $N_\sigma (K_{X'_k}+B'_k)$ is a $\Q$-divisor and hence so is $$\Theta _k:=B'_k-(B'_k\wedge N_\sigma (K_{X'_k}+B'_k)).$$
Note that $R(K_{X'_k}+\Theta _k)\cong R(K_{X'_k}+B'_k)$, $(X'_k, \Theta _k)$ is terminal and no component of $\Theta _k$ is contained in $\bB (K_{X'_k}+\Theta _k)$ \cite[2.8.3]{HMX14} and \cite[2.4]{HX13}.
Let $\Theta $ be the unique $\Q$-divisor supported on $B'$ such that $\Theta |_{X'_k}=\Theta _k$.
We remark that if $\kappa (K_{X_k}+B_k)=1$ and $B_k$ is big over ${\rm Proj}R(K_{X_k}+B_k)$, then by Proposition \ref{p-mmps}, $\Theta _k$ is big over  ${\rm Proj}R(K_{X'_k}+\Theta _k)$.
By Claim \ref{c-1}, it follows that $\kappa (K_{X'_K}+\Theta _K)=\kappa  (K_{X'_k}+\Theta _k)$ and 
there exists an integer $m_0>0$ such that  $$h^0(m(K_{X'_K}+\Theta _K))=h^0(m(K_{X'_k}+\Theta _k))\qquad \forall m\in m_0\mathbb N.$$ 
By semicontinuity, we then have $$h^0(m(K_{X_k}+B_k))\geq h^0(m(K_{X_K}+B_K))\geq h^0(m(K_{X'_K}+B'_K))\geq\qquad$$ $$\qquad h^0(m(K_{X'_K}+\Theta _K))=h^0(m(K_{X'_k}+\Theta _k))=h^0(m(K_{X_k}+B_k))$$ and hence
$h^0(m(K_{X_k}+B_k))=h^0(m(K_{X_K}+B_K))$. 
The equality
 $\kappa (K_{X_k}+B_k)=\kappa (K_{X_K}+B_K)$ follows similarly.

\end{proof}
\begin{corollary}\label{c-fingen} Let $(X,B)$ be a klt pair which is log smooth, projective of dimension $2$ over a DVR $R$ with perfect residue field $k$ of characteristic $p>0$ and perfect fraction field $K$. If $K_X+B$ is $\Q$-Cartier, $H$ is an ample divisor on $X$, and either $\kappa (K_{X_k}+B_k)\in \{0,2\}$ or $\kappa (K_{X_k}+B_k)=1$ and $B_k$ is big over ${\rm Proj} R (K_{X_k}+B_k)$, then
$R(K_X+B,K_X+B+H)$ is finitely generated.\end{corollary}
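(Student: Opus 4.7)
The strategy relativizes the proof of Theorem \ref{t-mmps} over the DVR $R$, using Theorem \ref{t-definv} as the key new input. First, I would show that the canonical ring $R(K_X+B)=\bigoplus_{m\geq 0}H^0(X,m(K_X+B))$ is a finitely generated $R$-algebra. Each module $H^0(X,m(K_X+B))$ is torsion-free over the DVR $R$, hence free, of rank $h^0(X_K,m(K_{X_K}+B_K))$ by flat base change. Under the hypotheses, Theorem \ref{t-definv} gives equality of this rank with $h^0(X_k,m(K_{X_k}+B_k))$ for all $m\in m_0\N$; combined with the injection $H^0(X,m(K_X+B))\otimes_R k\hookrightarrow H^0(X_k, m(K_{X_k}+B_k))$ coming from the universal coefficient sequence, a dimension count shows the base change map is an isomorphism in these degrees. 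Since $R(K_{X_k}+B_k)$ is a finitely generated $k$-algebra by Theorem \ref{t-mmps} (the hypothesis on $\kappa$ guarantees finite generation of the one-parameter section ring), lifting finitely many generators of its $m_0$-th Veronese and applying a graded Nakayama argument gives finite generation of $R(K_X+B)$ over $R$.

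Next, I would mimic the proof of Theorem \ref{t-mmps} by running the $(K_X+B)$-MMP with scaling of $H$ over $R$. Applied on the special fiber (where $K_{X_k}+B_k$ is pseudo-effective since $\kappa\geq 0$), the scaling MMP terminates and produces scaling thresholds $t_0\leq t_1\leq\ldots\leq t_n$ together with divisorial contractions $X_k\to X_{k,i}$. Following Claim \ref{c-1} of the proof of Theorem \ref{t-definv}, after reducing to the case that $(X_k,B_k)$ is terminal with no component of $B_k$ contained in $\bB(K_{X_k}+B_k)$, each contraction is the contraction of an exceptional curve of the first kind. By repeated application of Theorem \ref{t-KU} (after possibly extending $R$), these contractions lift to a sequence $X\to X_1\to\ldots\to X_n$ of proper birational $R$-morphisms. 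At each stage, $K_{X_i}+B_i+t_{i-1}H_i$ and $K_{X_i}+B_i+t_iH_i$ are nef on the special fiber, hence on $X_i$ by Lemma \ref{l-open}, and semi-ample on each fiber by Theorem \ref{t-mmps}.

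Granted that each relative section ring $R_i:=R(K_{X_i}+B_i+t_{i-1}H_i,\, K_{X_i}+B_i+t_iH_i)$ is finitely generated over $R$, the argument concludes exactly as in the proof of Theorem \ref{t-mmps}: the direct sum $\bigoplus_i R_i$ is finitely generated and surjects onto $R(K_X+B,K_X+B+H)$, yielding the desired finite generation of the latter. The main obstacle is establishing this relative finite generation---a DVR-version of the statement that the bigraded section ring of a sum of two semi-ample divisors on a klt surface is finitely generated. The expected approach is to combine fiberwise semi-ampleness (Theorem \ref{t-mmps}) with a cohomology-and-base-change argument showing that $H^1$ on the special fiber vanishes for sufficiently divisible bidegrees, so that fiberwise generators of $R_i\otimes_R k$ can be lifted to the total space, mirroring the deformation-invariance technique used for $R(K_X+B)$ in the first step.
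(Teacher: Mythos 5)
Your toolkit is the right one (Theorem \ref{t-mmps} on the special fiber, Theorem \ref{t-definv} for deformation invariance, a Nakayama-type lifting), but the architecture you build with it has a genuine gap, and it sits precisely at the step you yourself flag as ``the main obstacle.'' The finite generation of each relative bigraded ring $R_i=R(K_{X_i}+B_i+t_{i-1}H_i,\,K_{X_i}+B_i+t_iH_i)$ over $R$ is not a technical lemma to be supplied afterwards --- it \emph{is} the statement of the corollary restricted to the subinterval $[t_{i-1},t_i]$, so deferring it leaves the proof without its essential content. Moreover, the mechanism you sketch for it does not work as stated: vanishing of $H^1$ on the special fiber in the relevant bidegrees is exactly what fails in positive characteristic unless the divisor is nef and big (where \cite[2.6]{Tan15} applies) or unless one reruns the delicate case analysis of Claim \ref{c-1} ($\Q$-triviality when $\kappa=0$, the multiplier-ideal argument via \cite[0.3]{Tan15} when $\kappa=1$ and $B_k$ is big over ${\rm Proj}R(K_{X_k}+B_k)$); in the direction $t_{i-1}=0$ with $\kappa(K_{X_k}+B_k)\in\{0,1\}$ there is no vanishing to appeal to. And even granting base-change isomorphisms on all the $H^0$'s, lifting a set of generators of $R_i\otimes_R k$ is not enough: you must show the lifts generate, which requires knowing the special-fiber ring is generated in a single bounded bidegree and then applying Nakayama to the multiplication maps --- a point your sketch supplies for $R(K_X+B)$ but elides for $R_i$.

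There are also secondary problems with lifting the MMP over $R$: Theorem \ref{t-KU} only contracts exceptional curves of the first kind, so you must first perform the reduction of Claim \ref{c-1}, replacing $(X,B)$ by $(X',\Theta)$ with $(X'_k,\Theta_k)$ terminal and no component of $\Theta_k$ in $\mathbf B(K_{X'_k}+\Theta_k)$; but $\Theta$ is built from $N_\sigma(K_{X'_k}+B'_k)$, and there is no reason the two-variable ring $R(K_X+B,K_X+B+H)$ is preserved by this substitution, since the negative part changes as $H$ is scaled in. The paper's proof sidesteps all of this and never lifts the MMP: it uses Theorem \ref{t-mmps} only to produce rationals $0=q_0<\cdots<q_n=1$ and an integer $m$ such that each $R(m(K_{X_k}+B_k+q_iH_k),m(K_{X_k}+B_k+q_{i+1}H_k))$ is generated in degree one; then Theorem \ref{t-definv}, applied to $K_X+B+q_iH$ (big on the special fiber for $q_i>0$, and covered by the hypothesis on $\kappa$ when $q_0=0$), gives surjectivity of $H^0(m(K_X+B+q_iH))\to H^0(m(K_{X_k}+B_k+q_iH_k))$; Nakayama applied to the multiplication maps then lifts degree-one generation to $X$, and \cite[2.25]{CL13} concludes. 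If you reorganize your argument along these lines --- working directly with the bigraded ring on $X$ rather than on lifted models --- the pieces you already have assemble into a complete proof.
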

\begin{proof} By Theorem \ref{t-mmps}, $R(K_{X_k}+B_k,K_{X_k}+B_k+H_k)$ is finitely generated and hence there is an sequence of rational numbers $0=q_0<q_1<q_2<\ldots <q_n=1$ and an integer $m$ such that $R(m(K_{X_k}+B_k+q_iH_k),m(K_{X_k}+B_k+q_{i+1}H_k))$ is generated in degree 1 i.e. by $H^0(m(K_{X_k}+B_k+q_iH_k))$ and $H^0(m(K_{X_k}+B_k+q_{i+1}H_k))$.
We may assume that each $m(K_{X}+B+q_iH)$ is Cartier.
By Theorem 
  \ref{t-definv}, after replacing $m$ by a multiple, we may assume that $$H^0(m(K_X+B+q_iH))\to H^0(m(K_{X_k}+B_k+q_iH_k))$$ is surjective. 
Therefore, the induced map $$S^kH^0(m(K_X+B+q_iH))\otimes S^jH^0(m(K_X+B+q_{i+1}H))\to$$ $$S^kH^0(m(K_{X_k}+B_k+q_iH_k))\otimes S^jH^0(m(K_{X_k}+B_k+q_{i+1}H_k))\to$$ $$H^0(mk(K_{X_k}+B_k+q_iH_k)+mj(K_{X_k}+B_k+q_{i+1}H_k))$$ is surjective.
By Nakayama's lemma, $$S^kH^0(m(K_{X}+B+q_iH))\otimes S^jH^0(m(K_{X}+B+q_{i+1}H))\to $$
$$H^0(mk(K_{X}+B+q_iH)+mj(K_{X}+B+q_{i+1}H))$$ is surjective and so $R(m(K_X+B+q_iH),m(K_X+B+q_{i+1}H))$ is finitely generated.  It follows easily that $R(m(K_X+B),m(K_X+B+H))$ is finitely generated. By \cite[2.25]{CL13}  $R(K_X+B,K_X+B+H)$ is finitely generated. 
\end{proof}
\begin{remark}\label{r_fg} By the same arguments in the proof of Corollary \ref{c-fingen}, one sees that if $\kappa (K_{X_k}+B_k)=2$ and $H_1,\ldots ,H_l$ are ample divisors, then $$R(K_X+B, K_X+B+H_1,\ldots , K_X+B+H_l):=$$
$$\bigoplus _{(m_0,\ldots ,m_l)\in \mathbb N^{l+1}}H^0(m_0(K_X+B)+m_1( K_X+B+H_1)+\ldots +m_l(K_X+B+H_l))$$
is finitely generated.
\end{remark}
\begin{corollary}\label{c-mmp} Let $(X,B)$ be a klt pair which is log smooth, projective of dimension $2$ over a DVR $R$ with perfect residue field $k$ of characteristic $p>0$ and perfect fraction field $K$. If $K_X+B$ is $\Q$-Cartier and $H$ is a sufficiently ample divisor on $X$ such that $H$ is general in $N^1(X/R)$,  then (after possibly extending $R$) we may run the $K_X+B$ mmp over $R$ with scaling of $H$ which  terminates with a $K_X+B+\tau H$ minimal model $X\dasharrow \bar X$ over $R$ where $\tau ={\rm inf}\{t\geq 0|\kappa (K_{X_k}+B_k+tH_k)\geq 0\}$.\end{corollary}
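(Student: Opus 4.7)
After base-changing to an appropriate extension of $R$ (harmless by the lemma in Section 2), I may assume $k$ is algebraically closed. The plan is to mirror the proof of Theorem \ref{t-definv}: perform the MMP on a terminalization where every step is a contraction of an exceptional curve of the first kind, lift each such step from $k$ to $R$ using Theorem \ref{t-KU}, and then descend to an MMP on $(X,B)$ via Proposition \ref{p-mmps}.

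First I would take a log resolution $\nu: X' \to X$ with $(X',B')$ terminal and $K_{X'}+B' = \nu^*(K_X+B)$, and define the $\mathbb{Q}$-divisor $\Theta$ on $X'$, supported on $B'$, by $\Theta_k = B'_k - B'_k \wedge N_\sigma(K_{X'_k}+B'_k)$. By \cite[2.8.3]{HMX14} the pair $(X'_k,\Theta_k)$ is terminal and no component of $\Theta_k$ lies in $\bB(K_{X'_k}+\Theta_k)$, so by the argument of Claim \ref{c-1}, every $K_{X'_k}+\Theta_k$-negative extremal ray is spanned by a $(-1)$-curve not contained in $\Theta_k$. Hence the $K_{X'_k}+\Theta_k$ MMP with scaling of $(\nu^*H)_k$---which terminates by Theorem \ref{t-mmps}---consists of a finite sequence of $(-1)$-curve contractions on smooth surfaces, each of which lifts to $R$ (after a finite DVR extension) by Theorem \ref{t-KU}. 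This yields a sequence $X' = Y'_0 \to Y'_1 \to \cdots \to Y'_N$ of divisorial contractions over $R$ with $K_{Y'_N}+\Theta_{Y'_N}+\tau(\nu^*H)_{Y'_N}$ nef on the special fiber, hence nef on $Y'_N$ by Lemma \ref{l-open}.

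Next I would descend to an MMP on $(X,B)$ itself: by Proposition \ref{p-mmps} applied inductively to the intermediate models on the special fiber, the contractions $Y'_{i-1,k} \to Y'_{i,k}$ partition into groups, each corresponding to a single $K_{X_k}+B_k$-divisorial contraction with scaling of $H_k$; the composition in each group consists of $(-1)$-curve contractions of $\nu$-exceptional curves, which may be peeled off one at a time on the smooth family $Y'_i$ via further applications of Theorem \ref{t-KU}. Composing yields a sequence of divisorial contractions $X \dasharrow \bar X$ over (an extension of) $R$ that realizes the $K_X+B$ MMP with scaling of $H$, terminating with $K_{\bar X}+B_{\bar X}+\tau H_{\bar X}$ nef.

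The main obstacle is to verify that the packaged groups of $(-1)$-curve contractions on the $X'$-side correspond to single divisorial contractions of $K_{Y_j}+B_{Y_j}+\lambda_j H_{Y_j}$-trivial extremal rays on $Y_j$ over $R$ (not merely over $k$). The generality of $H$ in $N^1(X/R)$ ensures the relevant extremal rays are unique on the special fiber, and iterated application of Theorem \ref{t-KU}, Lemma \ref{l-open}, and Proposition \ref{p-mmps} should produce the required lifts; the rationality of $\tau$, which follows from finite generation of $R(K_{X_k}+B_k,\,K_{X_k}+B_k+H_k)$ in Theorem \ref{t-mmps}, guarantees that only finitely many DVR extensions are needed throughout the process.
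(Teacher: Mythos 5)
Your proposal takes a genuinely different route from the paper, and the route has a real gap at exactly the point you flag as ``the main obstacle.'' The paper does \emph{not} prove this corollary by lifting $(-1)$-curve contractions from the special fiber via Theorem \ref{t-KU}; that technique is confined to Claim \ref{c-1}, where the pair is terminal, the contracted curves are guaranteed to be exceptional curves of the first kind on a \emph{smooth} surface, and all intermediate models stay smooth over $R$. For the MMP of $(X,B)$ itself with scaling of $H$, the extremal curves on the special fiber need not be $(-1)$-curves, the intermediate models $X_i$ are in general singular ($\Q$-factorial klt), and the extremal contraction over $R$ may be a \emph{flipping} contraction (a curve in $X_{i,k}$ contracted with no divisor contracted); Katsura--Ueno simply does not apply in any of these situations. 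Instead, the paper's proof runs through Corollary \ref{c-fingen} and Remark \ref{r_fg}: deformation invariance (Theorem \ref{t-definv}) plus Nakayama's lemma lift finite generation of the adjoint rings from $X_k$ to $X$ over $R$, the contractions $\mu_i:X_i\to Z_i$ and the flips are then constructed as ${\rm Proj}$ of these rings, and the relative Picard number one statement is extracted from the finite rational polyhedral decomposition of the cone of divisors (\cite[3.5]{CL13}, \cite[3.3]{HM13}), with the generality of $H$ in $N^1(X/R)$ used to ensure the scaling segment crosses the chamber decomposition through codimension-one faces.

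Concretely, the missing content in your argument is the descent step. Proposition \ref{p-mmps} compares minimal models of $(X',\Theta)$ and $(X,B)$ over a \emph{field}; it does not produce the morphisms $Y'_i\to X_i$ over $R$, nor the models $X_i$ over $R$ themselves. To ``package'' a group of $(-1)$-curve contractions upstairs into a single divisorial contraction or flip of $X_i$ over $R$ you need the relevant divisor on $X_i$ to be semiample over $R$ and the target to be ${\rm Proj}$ of a ring that is finitely generated \emph{over $R$} (not merely over $k$) --- this is precisely what Corollary \ref{c-fingen} supplies and what your proposal never establishes. Likewise the rationality of the intermediate thresholds $\lambda_{i+1}$ and the independence of the flip $X_{i+1}={\rm Proj}\, R(K_{X_i}+B_i+(\lambda_{i+1}-\delta)H_i)$ from $\delta$ both rest on relative finite generation. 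Without these inputs the sentence ``iterated application of Theorem \ref{t-KU}, Lemma \ref{l-open}, and Proposition \ref{p-mmps} should produce the required lifts'' is an assertion of the theorem rather than a proof of it. I would recommend restructuring the argument around Corollary \ref{c-fingen} and the geography-of-models machinery, as the paper does.
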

\begin{proof} Suppose that $X\dasharrow X_1 \dasharrow \ldots \dasharrow X_i$ is a sequence of steps in the $K_X+B$ mmp over $R$ with scaling of $H$, in particular $X_i$ is $\Q$-factorial and there exists a rational number $\lambda_i \in \Q_{\geq 0}$ such that $K_{X_i}+B_i+\lambda _i H_i$ is nef over $R$ where $B_i$ and $H_i$ are the strict transforms of $B$ and $H$ on $X_i$. 
Let $\lambda _{i+1}={\rm inf}\{\lambda \geq 0|K_{X_i}+B_i+\lambda  H_i\ {\rm is\ nef}\}$. If $\lambda _{i+1}=0$ we are done, and therefore we may assume that $0<\lambda _{i+1}\leq \lambda _i$.

Suppose that $K_{X_k}+B_k+\lambda _{i+1} H_k$ is big. Since bigness is an open condition, $K_{X_k}+B_k+\lambda ' H_k$ is also big for some rational number $\lambda '<\lambda _{i+1} $. By Corollary \ref{c-fingen}, $R(K_X+B+\lambda ' H, K_X+B+\lambda _i H)$ is finitely generated and hence so is $R(K_{X_i}+B_i+\lambda ' H_i, K_{X_i}+B_i+\lambda _{i}H_i)$. It then follows easily that $\lambda _{i+1}\in \Q$.
 Since $R(K_{X_i}+B_i+\lambda _{i+1}H_i)$ is finitely generated, there exists a morphism 
$\mu _i:X_i\to Z_i:={\rm Proj}(R(K_{X_i}+B_i+\lambda _{i+1}H_i))$. 

We claim that $\rho (X_i/Z_i)=1$. To see this, pick ample $\Q$-divisors $H^1,\ldots , H^l$ on $X$ such that $||H-H^i||\ll 1$ in $N^1(X)$ and  $H^1,\ldots , H^l$ span $N^1(X)$.
Let $\mathcal C$ be the cone spanned by $K_X+B+\lambda ' H, K_X+B+H^1,\ldots , K_X+B+H^l$, then by Remark
\ref{r_fg}, the adjoint ring
$$R( K_X+B+\lambda ' H, K_X+B+H^1,\ldots , K_X+B+H^l),$$ is finitely generated.
Let $\mathcal C=\cup _{j=1}^s\mathcal C_j$ be the finite decomposition of $\mathcal C$ in to rational cones given by \cite[3.5]{CL13}. 
Let $K_X+B+\mathcal H$  be  an element in the interior of one of the maximal dimensional cones $\mathcal C _j$, then following \cite[3.3]{HM13} there is an induced birational map $g:X\dasharrow Y={\rm Proj} R(K_X+B+\mathcal H)$ to a log terminal model of $K_X+B+\mathcal H$. Since $H$ is general in $N^1(X)$, it follows that $\lambda _{i+1}$ is determined by the intersection of the segment $[K_X+B+\lambda 'H,K_X+B+H]$ with a codimension one face of the decomposition $\mathcal C=\cup _{j=1}^s\mathcal C_j$. We may assume that this face is shared by maximal dimensional cones $\mathcal C _i$ and $\mathcal C _{i+1}$ where $K_X+B+\lambda _{i+1}H$ corresponds to a general element of $ \mathcal C _{i}\cap \mathcal C _{i+1}$ and  if $K_X+B+\mathcal H\in \mathcal C _i^\circ$, then $X_i\cong {\rm Proj}   R(K_X+B+\mathcal H)$.
Following \cite[3.3(4)]{HM13}, there is a morphism $\mu _i:X_i\to Z_i:={\rm Proj}(R(K_{X_i}+B_i+\lambda _{i+1}H_i))$ of relative Picard number 1.

If ${\rm Ex}(\mu _i)$ is a divisor, then $Z_i$ is $\Q$-factorial, we let $X_{i+1}=Z_i$ and 
$X_i\to X_{i+1}$ is the required divisorial contraction. 
If ${\rm Ex}(\mu _i)$ is not a divisor, then $X_i\to Z_i$ is a flipping contraction and we let $X_{i+1}={\rm Proj}(R(K_{X_i}+B_i+(\lambda _{i+1}-\delta)H_i))$ for any $0<\delta \ll 1$. Note that by the finite generation of $R(K_X+B+\lambda ' H, K_X+B+H)$, it follows that $X_{i+1}$ does not depend on $\delta$. 
After replacing $X_i$ by $X_{i+1}$, we may repeat the above procedure. 

	Suppose now that $K_{X_k}+B_k+\lambda _{i+1} H_k$ is not big and $K_{X_k}+B_k+\lambda _{i} H_k$ is  big. Note that by what we have observed above $X\dasharrow X_{i}$ is a minimal model for $(X,B+\lambda H)$ for any $\lambda _{i+1}<\lambda <\lambda _i$ and so it is a $\Q$-factorial weak log canonical model for $(X,B+\lambda _{i+1} H)$. By construction, $X\dasharrow X_{i}$ is $K_X+B+\lambda _{i+1} H$ negative and hence  $X\dasharrow X_{i}$ is a minimal model for $(X,B+\lambda _{i=1}H)$.
\end{proof}

\end{document}